\theoremstyle{plain}   
\newtheorem{theorem}{Theorem}
\newtheorem{definition}{Definition}
\newtheorem{proposition}{Proposition}
\newtheorem{corollary}{Corollary}
\newtheorem{question}{Question}
\newtheorem{problem}{Problem}
\newtheorem{observation}{Observation}
\newcommand{\dist}{\mathrm{dist}}
\newcommand{\dg}{\mathrm{deg}}
\begin{document}

\title{On new record graphs close to bipartite Moore graphs}

\author{Gabriela Araujo \\
\small Instituto de Matem\'aticas \\[-0.8ex]
\small Universidad Nacional Autonóma de México \\[-0.8ex]
\small M\'exico D.F. 04510,M\'exico\\[-0.8ex]
\small \texttt{garaujo@matem.unam.mx}\\ [2ex]
Nacho L\'opez \\
\small Departament de Matem\`atica\\[-0.8ex]
\small Universitat de Lleida\\[-0.8ex]
\small Jaume II 69, 25001 Lleida, Spain\\[-0.8ex]
\small \texttt{nacho.lopez@udl.cat}\\
}
\date{\today}
\maketitle

\begin{abstract}
The modelling of interconnection networks by graphs motivated the study of several extremal problems that involve well known parameters of a graph (degree, diameter, girth and order) and ask for the optimal value of one of them while holding the other two fixed. Here we focus in {\em bipartite Moore graphs\/}, that is, bipartite graphs attaining the optimum order, fixed either the degree/diameter or degree/girth. The fact that there are very few bipartite Moore graphs suggests the relaxation of some of the constraints implied by the bipartite Moore bound. First we deal with {\em local bipartite Moore graphs}. We find in some cases those local bipartite Moore graphs with local girths as close as possible to the local girths given by a bipartite Moore graph. Second, we construct a family of $(q+2)$-bipartite graphs of order $2(q^2+q+5)$ and diameter $3$, for $q$ a power of prime. These graphs attain the record value for $q=9$ and improve the values for $q=11$ and $q=13$.\end{abstract}

\noindent {\it Keywords:} Bipartite Moore bound, Bipartite graph, girth, local girth.


\section{Introduction}

The {\emph{degree/diameter problem}} for graphs consists in finding the largest order of a graph with prescribed degree and diameter (for a survey of it see \cite{MS16}). Here we focus on the class of bipartite graphs. In this context, given the values of maximum degree $\Delta$ and diameter $d$ of a bipartite graph, there is a natural upper bound for its number of vertices $n$,
\begin{equation}\label{eq:bipmoore_bound}
n\leq M^b_{\Delta,d} =
\left\{
\begin{array}{ccc}
2\dfrac{(\Delta-1)^d-1}{\Delta-2} & \textrm{ if } & \Delta > 2, \\
2d & \textrm{ if } & \Delta=2.
\end{array}
\right.
\end{equation}

where $M^b_{\Delta,d}$ is known as the {\em bipartite Moore bound\/}. It is well known that biparite Moore graphs only exists for a few combinations of the parameters $\Delta$ and $d$, namely, $\Delta=1$ and $d=1$ (Complete graph of two vertices), $\Delta = 2$ and $d \geq 1$ (Cycle graphs),  $d = 2$ and $\Delta \geq 3$ (Complete bipartite graphs). For $d = 3, 4, 6$ bipartite Moore graphs have been constructed only when $\Delta-1$ is a prime power (see \cite{MS16}). \\

The fact that there are very few Moore graphs suggested the study of graphs `close' to the Moore ones. This `closeness' has been usually measured as the difference between the (unattainable) Moore bound and the order of the considered graphs. In this sense, the existence and construction of graphs with small `defect' $\delta$ (order $n=M(\Delta,d)-\delta$) has deserved much attention in the literature (also see \cite{MS16} and \cite{UPCpage} for complete surveys of the problem). \\

Besides, the {\em degree\/}/{\em girth problem\/} (also known as {\em the cage problem\/}) consists in finding the smallest order of a graph with prescribed degree and girth (for a survey of it see \cite{ExooJaj08}). In this context, given the values of the maximum degree $\Delta>2$ and the girth $g$ of a bipartite graph (notice that $g$ must be even) there is a natural lower bound for its number of vertices $n$,
\begin{equation}\label{eq:bipgirthmoore_bound}
n\geq M^b_{\Delta,g} = 2\big(1+(\Delta-1)+(\Delta-1)^2+\cdots+(\Delta-1)^{\frac{g-2}{2}}\big)=2\dfrac{(\Delta-1)^{g/2}-1}{\Delta-2}
\end{equation}

graphs attaining such a bound are again biparite Moore graphs and hence both problems collide in their corresponding extremal value ($g=2d$). Both bounds \eqref{eq:bipmoore_bound} and \eqref{eq:bipgirthmoore_bound} can easily calculated just by counting the number of vertices at every distance from any edge $uv$ in a {\em bipartite Moore tree} (see Figure \ref{fig:moore_graph}) where the depth of the tree is $d$ (and therefore the graph must have girth $g=2d$) and where every vertex has degree $\Delta$ (except those vertices at depth $d$, that is, terminal vertices of the tree).

\begin{figure}[htb]
        \centerline{\includegraphics*[scale=0.6]{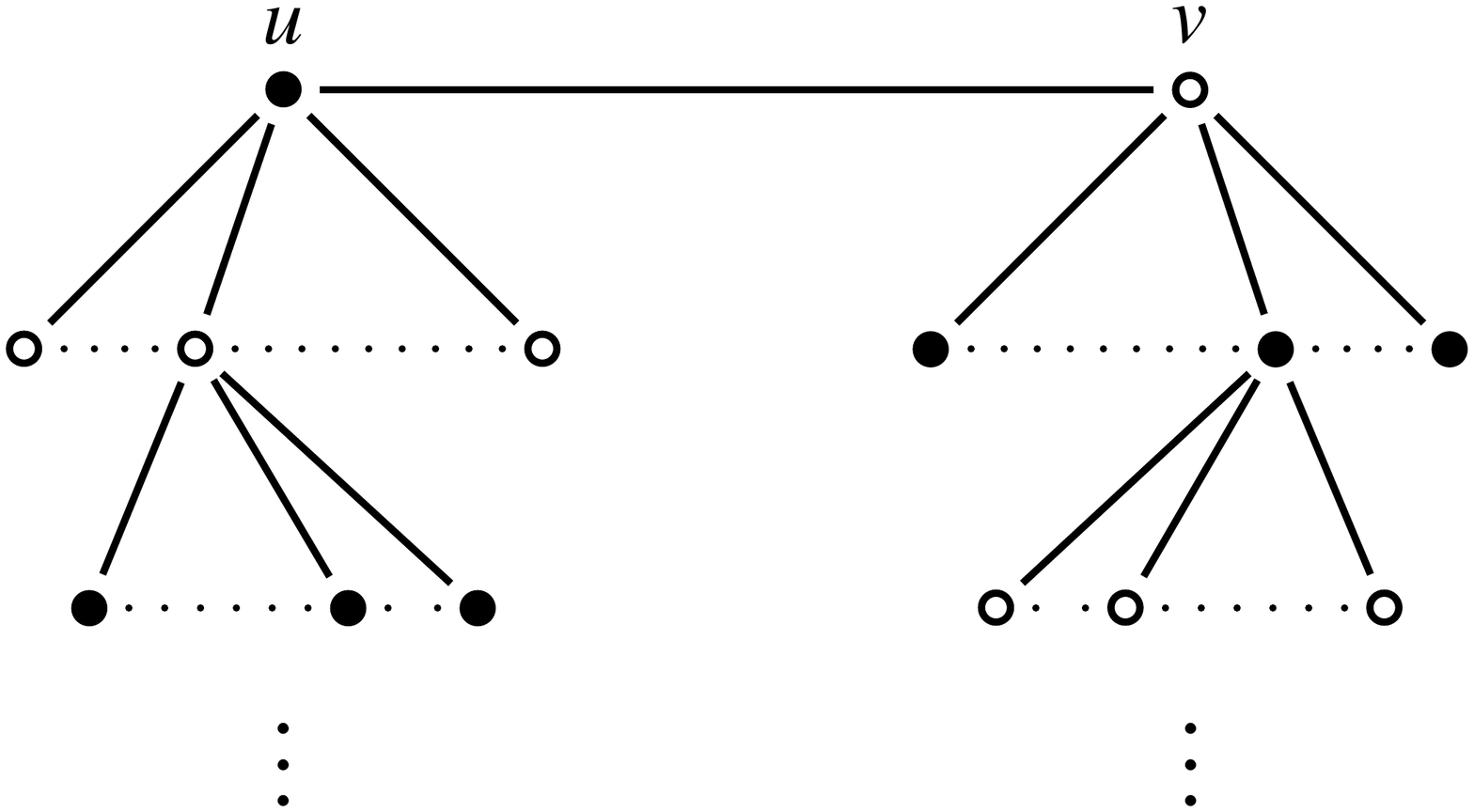}}
        \caption{Bipartite Moore tree, that is, a distance-preserving spanning tree of a bipartite Moore graph depicted `hanging' from any of its edges $uv$.}
        \protect\label{fig:moore_graph}
\end{figure}

Also the existence of graphs with small `excess' $\epsilon$ (order $n=M^b_{\Delta,g} +\epsilon$) has been studied deeply and those graphs with prescribed degree/girth and minimum order are known as $(\Delta,g)$-{\em cages}. \\

Another kind of approach considers relaxing some of the constraints implied by the Moore bound. For instance, this approach has been considered yet in the context of the degree/diameter problem, where Tang, Miller and Lin \cite{TML08} relax the condition of the degree and admit few vertices with degree $\Delta+\epsilon$. Alternatively, Capdevila et al. \cite{Cap10} allow the existence of vertices with eccentricity $d+1$. In this context, regular graphs of degree $d$, radius $d$, diameter $d+1$ and order equal to the Moore bound are known as {\em radial Moore graphs\/}.

\subsection*{Contributions in this paper}



We study two problems related with the `relaxation' of bipartite Moore graphs. In section \ref{sec:measure}, inspired in the idea given by Capdevila et al \cite{Cap10} about radial Moore graphs (relaxing the diameter of the graph), we define the local bipartite Moore graphs as graphs preserving one structural property that every bipartite Moore graph has (in this case we relax the girth). We enumerate these extremal graphs in a special case and we rank them according to their proximity to a bipartite Moore graph.

We attend a problem related with the classical relaxation of `closeness' in section \ref{sec:gabi}, constructing  bipartite graphs with small defect. In particular, we construct a $(q+2)$-regular bipartite graph of diameter $3$ and girth $4$ adding $8$ vertices to the incidence graph of the projective plane of order $q$. This result presents new record graphs in the context of the degree/diameter problem, to consult the record regular bipartite graphs known until this moment see \cite{table}. Moreover, the graphs attained and improved with our construction are given in (\cite{BDF81,DF84,FPMPV13}).

Finally, in section \ref{sec:openproblems}, we present several questions and open problems related to this work.

\subsection*{Terminology and notation}

Let $G=(V,E)$ be a connected graph. Given two vertices $u$ and $v$ of $G$, the distance between $u$ and $v$, $\dist_G(u,v)$, is the length of a shortest path joining them. The  {\em girth\/} of a graph $G$ is the length of its shortest cycle. If we restrict our attention to the cycles through a given vertex $v$, we can define the {\em local girth\/} of vertex $v$, $g(v)$, as the smallest length of such `rooted' cycles. The vector $\mathbf{g}(G)$ constituted by the girths of all its vertices will be referred to as the {\em girth vector\/} of $G$. Usually, when the vector is long enough, we denote it with a short description using superscripts, that is, $\mathbf{g}(G): g_1^{n_1},g_2^{n_2},\dots,g_k^{n_k}$, where $g_1>g_2> \dots >g_k$, and $n_i$ denotes the number of vertices having $g_i$ as its local girth, for all $1 \leq i \leq k$.  
\section{Local bipartite Moore graphs}\label{sec:measure}

Radial Moore graphs (see \cite{Cap10,EGLG12,GM15}) are defined as regular graphs of degree $\Delta$, order $M^b_{\Delta,d}$, radius $d$ and diameter $d+1$. In other words, radial Moore graphs have the same distance-preserving spanning tree than a Moore graph has for any vertex, but only for some of their vertices (those vertices with eccentricity $d$). In a radial Moore graph it is allowed the existence of other spanning trees of depth at most $d+1$. Here we do something similar with bipartite Moore graphs: if we hang a bipartite Moore graph from any of its edges $uv$ we observe the `same' distance-preserving spanning tree (see figure \ref{fig:moore_graph}). Thus we could relax this property in the set of regular bipartite graphs by forcing the existence of this Moore tree for at least one edge, but allowing others distance-preserving spanning trees in the graph for other edges. This is how we define a {\em local bipartite Moore graph}.

\begin{definition}
Given two positive integers $\Delta \geq 2$ and $g \geq 4$, a connected regular bipartite graph of degree $\Delta$ and order $M^b_{\Delta,g}$ is said to be a local bipartite Moore graph if it contains at least one edge such that its corresponding distance-preserving spanning tree is a bipartite Moore tree (see figure \ref{fig:moore_graph}).  
\end{definition}

From this point of view, bipartite Moore graphs are a particular case of local bipartite Moore graphs having the same distance-preserving spanning tree for any of its edges. There are at least two vertices with local girth $g$ in a local bipartite Moore graph (the end vertices of the edge $uv$ in figure \ref{fig:moore_graph}), meanwhile for a Moore graph all vertices have local girth $g$ (and hence the whole graph has girth $g$). \\

Let us denote by ${\cal RB}(\Delta,g)$ the set of all nonisomorphic regular bipartite graphs of degree $d$ and order $M^b_{\Delta,g}$. The set of local bipartite Moore graph of degree $\Delta$ and girth $g$ will be denoted as ${\cal LBM}(\Delta,g)$. Of course ${\cal LBM}(\Delta,g) \subseteq {\cal RB}(\Delta,g)$ and our purpose is give some extra information about this set. \\

Let us start with the case $g=4$ and any $\Delta \geq 2$. Let $G \in {\cal LBM}(\Delta,4)$, then, according to Eq. \eqref{eq:bipgirthmoore_bound}, $G$ has order $M^b_{\Delta,4}=2\Delta$ and $G$ contains the Moore tree depicted in Fig. \ref{fig:bmoore_graphg4}. Due to the regularity of $G$, every vertex of the set $\{u_2,\dots,u_{\Delta}\}$ is adjacent to each vertex of $\{v_2,\dots,v_{\Delta}\}$ and viceversa. Hence local bipartite Moore graphs for $g=4$ are complete bipartite graphs $K_{\Delta,\Delta}$ and they are indeed Moore graphs (both for girth $g=4$ or diameter $d=2$).
\begin{figure}[htb]
\begin{center}
\begin{tabular}{c}
\includegraphics*[scale=0.6]{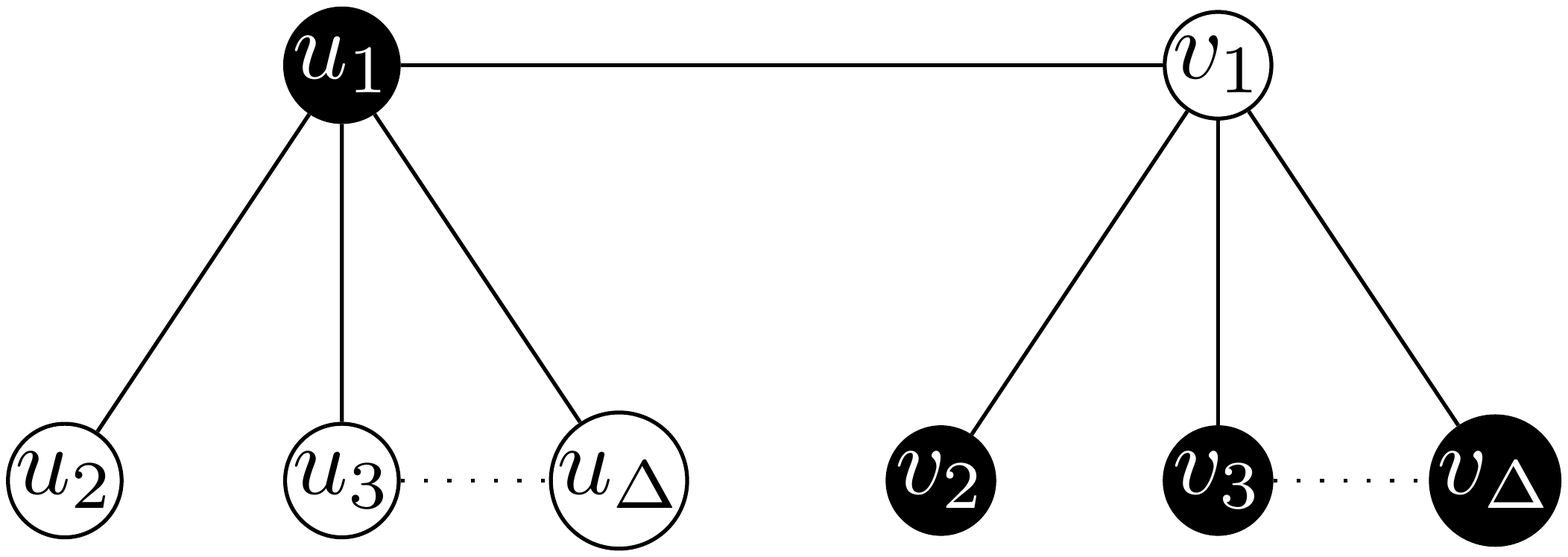} 
\end{tabular}
\end{center}
        \caption{Spanning tree of a bipartite Moore graph for $g=4$.}
        \protect\label{fig:bmoore_graphg4}
\end{figure}

The case $g=6$ requires a deeper analysis. Next proposition gives a complete enumeration of these graphs for the cubic case $\Delta=3$.

\begin{proposition}\label{prop:enum}
There are $5$ graphs in the set ${\cal LBM}(3,6)$, namely $H,G_1,G_2,G_3$ and $G_4$ depicted in figures \ref{fig:examp1} and \ref{fig:examp2}, where $H$ is the unique  bipartite Moore graph for these parameters (also known as the Heawood graph). 
\end{proposition}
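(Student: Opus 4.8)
The plan is to show that every $G\in{\cal LBM}(3,6)$ arises from the bipartite Moore tree by a tightly constrained construction, and then to enumerate the finitely many outcomes up to isomorphism. First I would record that $M^b_{3,6}=14$, so $G$ has $14$ vertices, $7$ in each part. Fixing an edge $uv$ whose distance-preserving spanning tree is the Moore tree $T$ of Figure~\ref{fig:moore_graph}, the vertices $u,v$, their four level-$1$ neighbours $u_2,u_3,v_2,v_3$, and the eight level-$2$ vertices exhaust $V(G)$. Every vertex of $T$ other than the eight leaves already has degree $3$ in $T$, so all the freedom lies in the edges incident with the leaves: each leaf needs exactly two further edges, and since the non-leaf vertices are saturated these edges can only join the four $A$-leaves to the four $B$-leaves. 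Hence the ``extra'' edges form a $2$-regular bipartite graph $R$ on the $4+4$ leaves. Conversely, any such $R$ yields a graph of ${\cal LBM}(3,6)$: because the neighbourhoods of $u,v,u_2,u_3,v_2,v_3$ are fixed by $T$ and unaffected by $R$, no two neighbours of $u$ (and no two neighbours of $v$) have a second common neighbour, so no $4$-cycle passes through $u$ or $v$ and the tree at $uv$ is automatically a Moore tree. Thus ${\cal LBM}(3,6)$ is exactly the family of graphs $T\cup R$.

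Next I would classify the possible $R$. A $2$-regular bipartite graph on $4+4$ vertices is a disjoint union of even cycles, so $R$ is either a single $8$-cycle $C_8$ or two $4$-cycles $2C_4$. I would then quotient by the symmetry group $\Gamma$ of the rooted tree $T$ --- generated by swapping the two leaf-children of each of $u_2,u_3,v_2,v_3$, swapping the two children of $u$ (and of $v$), and the involution $u\leftrightarrow v$ --- classifying $R$ by how it meets the sibling partition of the leaves. For $2C_4$ the datum is whether each $4$-cycle pairs sibling leaves (\emph{aligned}) or splits them (\emph{crossing}), giving the three types aligned/aligned, aligned/crossing, crossing/crossing; for $C_8$ it is whether the cyclic order of the four $A$-leaves (and of the four $B$-leaves) alternates or clusters the two sibling pairs, giving alternating/alternating, alternating/clustered, clustered/clustered. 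A short orbit computation (the $90$ labelled choices of $R$ split as $18$ of type $2C_4$ falling into three $\Gamma$-orbits, and $72$ of type $C_8$ into three) confirms these six types are exactly the $\Gamma$-orbits, so there are at most six candidate graphs.

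Finally I would separate the candidates by isomorphism invariants and resolve the one coincidence. Computing the local girth of every vertex and the total number of $4$-cycles, I expect the $4$-cycle counts $0,2,2,4,6,10$, with alternating/alternating $C_8$ giving girth $6$ everywhere (the Heawood graph $H$, girth vector $6^{14}$) and the other cases producing girth vectors $6^{6}4^{8}$, $6^{4}4^{10}$ and $6^{2}4^{12}$. The main obstacle is that two different $\Gamma$-orbits --- crossing/crossing $2C_4$ and alternating/clustered $C_8$, both with two $4$-cycles and girth vector $6^{6}4^{8}$ --- may or may not give the same graph. This is genuinely delicate because an isomorphism of $G$ need not fix the distinguished Moore edge, so a single graph can decompose as $T\cup R$ in several ways. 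I would settle it by \emph{re-rooting}: starting from the crossing/crossing $2C_4$ graph and rebuilding the spanning tree at a different Moore edge (for instance $uu_2$, whose endpoints also have local girth $6$), the extra edges reassemble into an alternating/clustered $C_8$, so the two candidates are isomorphic. After this single identification the remaining five graphs are pairwise non-isomorphic, since their numbers of $4$-cycles, namely $0,2,4,6,10$, are all distinct; this same invariant also closes off any further hidden coincidence (it distinguishes, in particular, the two graphs that share the girth vector $6^{2}4^{12}$). Hence $|{\cal LBM}(3,6)|=5$, with $H$ the Heawood graph, as claimed.
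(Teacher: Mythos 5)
Your proposal is correct, and its skeleton is the same as the paper's: fix the Moore tree hanging from the distinguished edge, observe that the remaining edges must form a $2$-regular bipartite graph on the eight leaves (hence a $C_8$ or a $C_4\cup C_4$), and classify the possibilities by how the cycles meet the four sibling pairs. Your six symmetry types correspond exactly to the paper's cases (a1)--(a4) and (b1)--(b3), with (a1)$\cong$(a2) already folded into one type, and your labelled counts ($90=18+72$) and $4$-cycle counts ($0,2,2,4,6,10$) are all correct: every $4$-cycle in $T\cup R$ is either a $C_4$ component of $R$ or consists of a sibling pair, a common $R$-neighbour, and the parent, which yields precisely those values. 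Where your write-up genuinely improves on the paper is in how the isomorphism questions are settled. The paper identifies its cases (a1) and (b1) as the same graph $G_1$ only by exhibiting figures and invoking ``a convenient relabeling''; your re-rooting argument --- rebuilding the spanning tree at a second Moore edge such as $uu_2$ of the crossing/crossing $C_4\cup C_4$ graph and watching the leftover edges reassemble into an alternating/clustered $C_8$ --- is a clean, verifiable proof of that coincidence (the computation does work out exactly as you claim). Likewise, the paper never explicitly separates $G_3$ from $G_4$, which share the girth vector $6^2,4^{12}$, the only invariant it records; your $4$-cycle count ($4$ versus $10$) is exactly the extra invariant needed to close that gap, and it simultaneously certifies that no further hidden coincidence can reduce the count below five. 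Finally, you state and justify the converse --- that every admissible $R$ yields a member of ${\cal LBM}(3,6)$, since $R$-edges join only leaves and so cannot create $4$-cycles through $u$ or $v$ or shorten any distance from the root edge --- a point the paper uses implicitly but never argues; it is needed to know that all five graphs actually qualify.
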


\begin{proof}
 Let $G$ be a graph $ G \in {\cal LBM}(3,6)$. By definition, $G$ contains the distance-preserving spanning tree depicted in Figure \ref{fig:examp1} and we label the vertices as they are depicted in this figure ($B$ for `black' vertices and $W$ for `white' vertices). Notice that the edges of $G$ that do not appear in this Moore tree must join vertices in the set $S=\{B_1,B_2,B'_1,B'_2,W_1,W_2,W'_1,W'_2\}$, that is, those vertices at maximum distance from the `root' edge $(W_r,B_r)$. Moreover, the subgraph $G_S$ of $G$ induced by the set of vertices $S$ must be $2$-regular, and since it is bipartite, then $G_S$ is either a cycle graph of length $8$ or the union of two cycle graphs of order $4$. Hence, in order to complete graph $G$, we have to analyze several cases:
 \begin{figure}[htb]
\begin{tabular}{ccc}
\includegraphics*[scale=0.4]{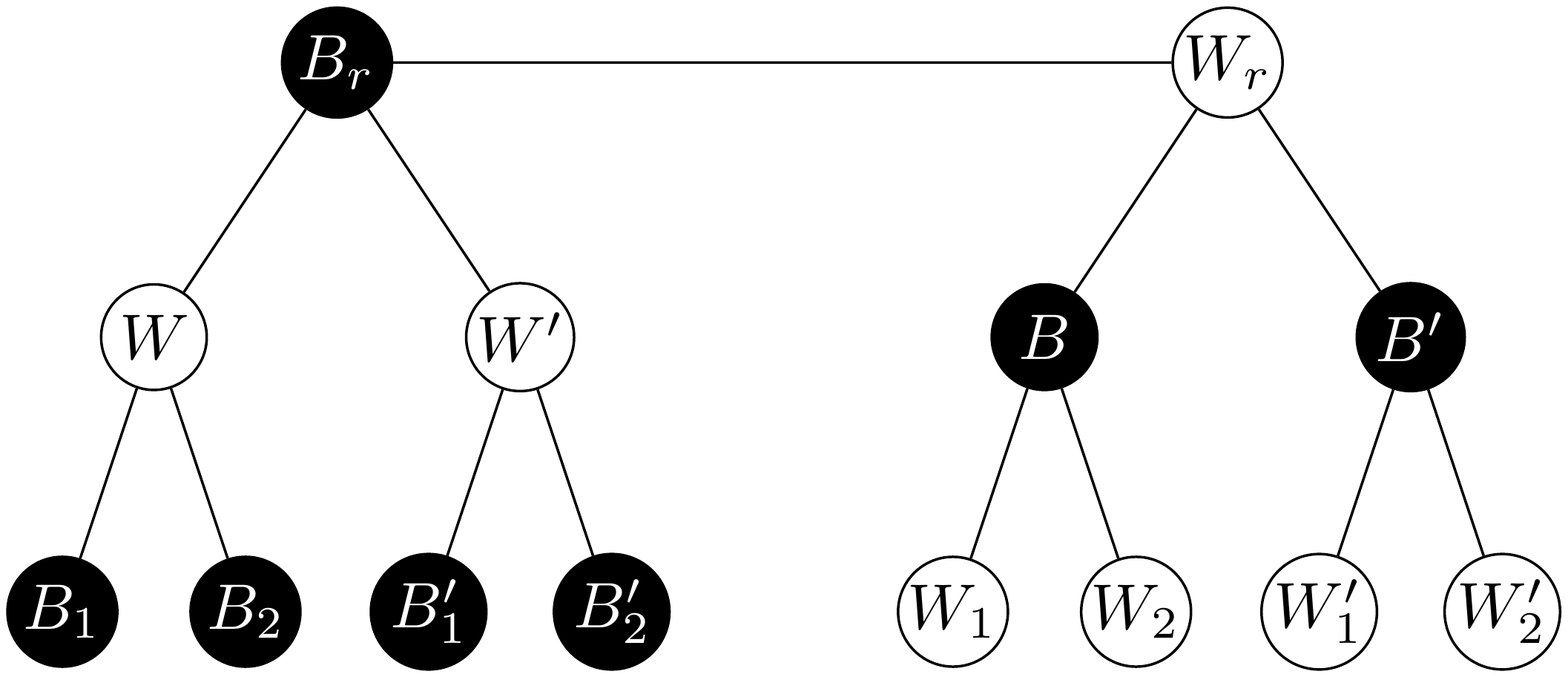} & & \includegraphics*[scale=.3]{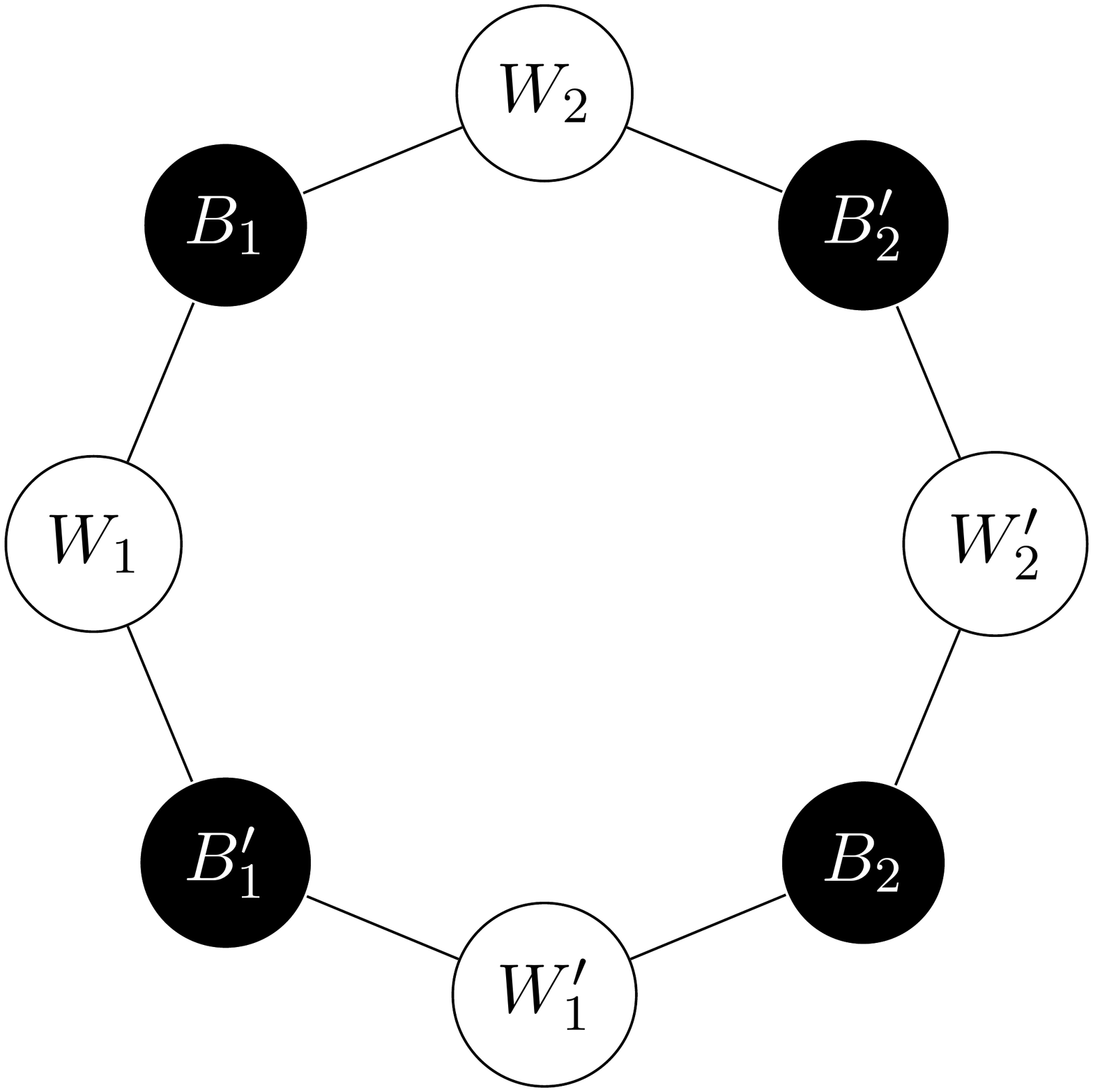} \\
(a) & & (b)
\end{tabular}
        \caption{(a) Spanning tree of a cubic bipartite Moore graph for $g=6$ where vertices are labelled as in Proposition \ref{prop:enum}. (b) The subgraph $G_S$ induced by the vertices at maximum distance from the edge root $(B_r,W_r)$ for the case (a1) in Proposition \ref{prop:enum}.}
        \protect\label{fig:bipcubmoore_graph}
\end{figure}
 \begin{itemize}
  \item[(a)] $G_S=C_8$: We divide this case according to the distance between vertices inside the cycle:
  \end{itemize}
  
  \begin{itemize}
   \item[(a1)] If $\dist_{G_S}(B_1,B_2)=4$ and $\dist_{G_S}(W_1,W_2)=2$. \\
   Then, $\dist_{G_S}(B'_1,B'_2)=4$ and $\dist_{G_S}(W'_1,W'_2)=2$. There are several labelings of $C_8$ with these conditions, one of them is depicted in Fig. \ref{fig:bipcubmoore_graph}. Nevertheless, all of them are equivalent performing a convinient relabeling of vertices in $G$. However, the local girth for all $v \in \{W_1,W_2,W'_1,W'_2,B,B'\}$ is $g(v)=4$, since they belong to a cycle of length $4$ in $G$. Moreover, since $\dist_{G_S}(W_1,W_2)=2$ then $W_1$ and $W_2$ must be adjacent both to just one black vertex in $G_S$ (vertex $B_1$ in Fig. \ref{fig:bipcubmoore_graph}). Hence this black vertex has also local girth $4$. Another one black vertex has also local girth $4$ because $\dist_{G_S}(W'_1,W'_2)=2$. It is easy to see that the remaining vertices have maximum local girth. Hence ${\bf g}(G): 6^6,4^8$. A complete representation of graph $G$ is given in Fig. \ref{fig:prop1}, where subgraph $G_S$ is depicted in red color.

   \begin{figure}[htb]
\centering
\begin{tabular}{ccc}
\includegraphics[width=.45\textwidth]{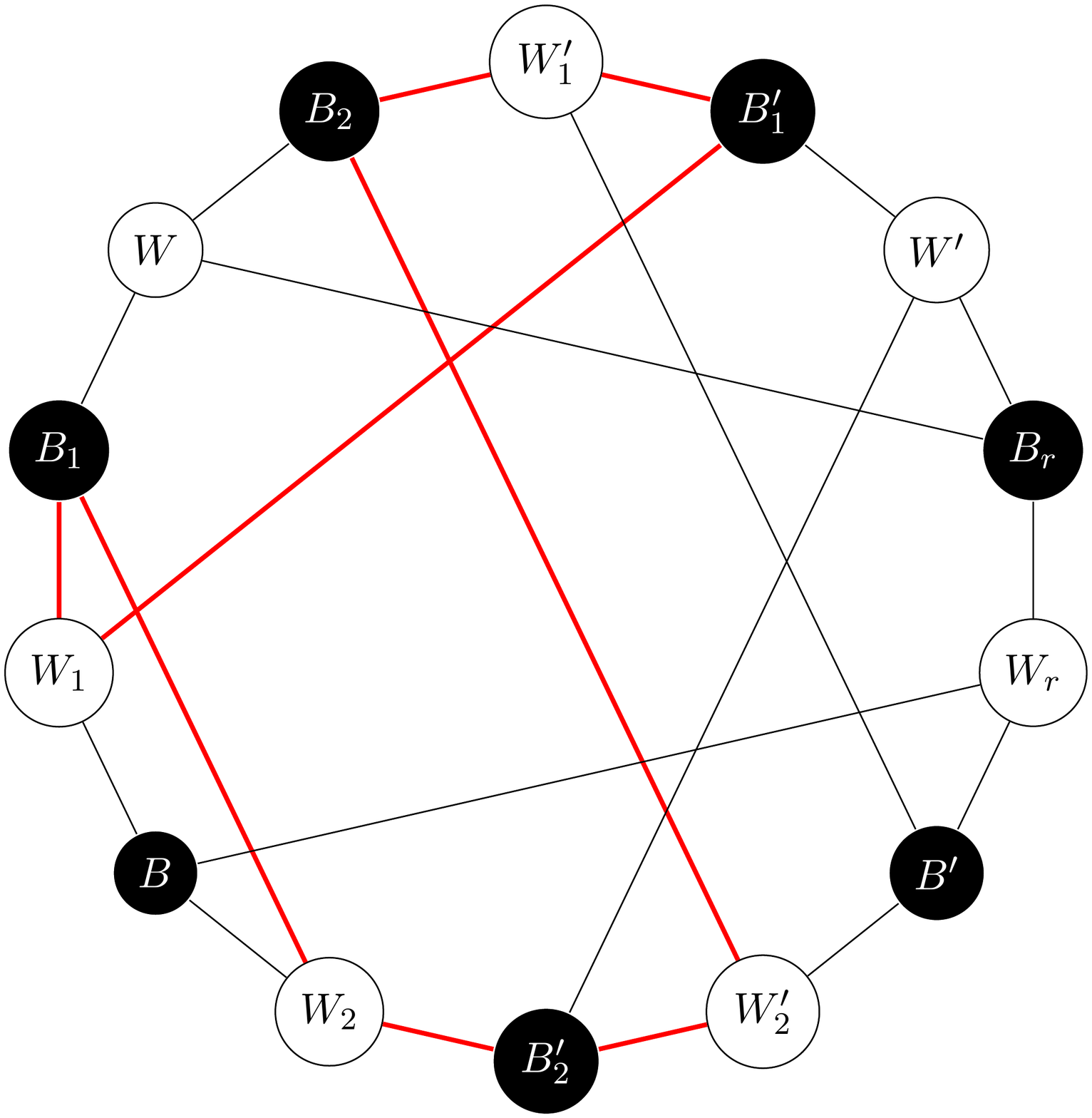} & & \includegraphics[width=.45\textwidth]{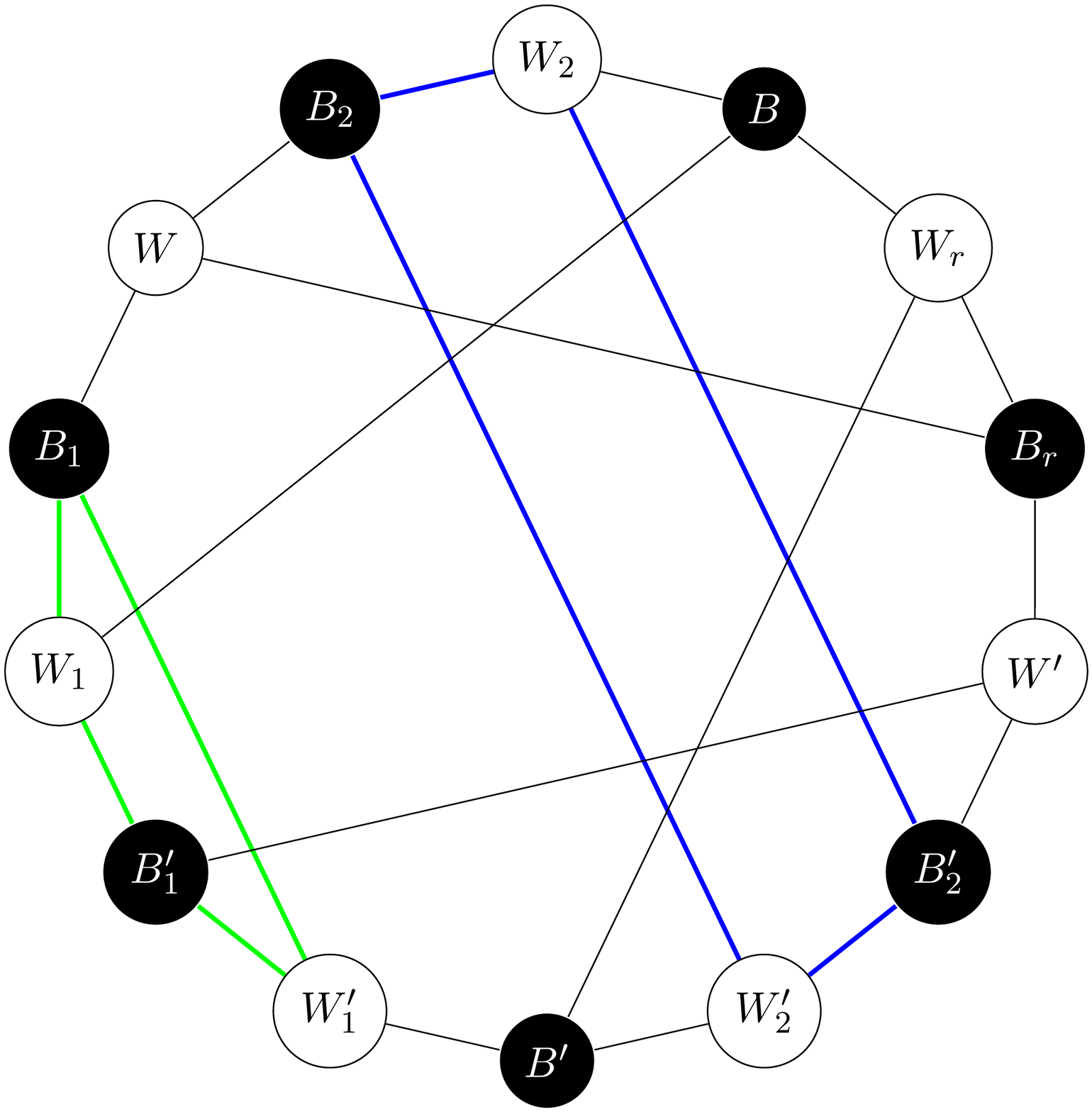} \\
(a1) & & (b1) \\
\end{tabular}
\caption{(a) The graphs obtained in cases (a1) and (b1) of Proposition \ref{prop:enum}. They are isomorphic to the graph $G_1$ depicted in figure \ref{fig:examp1}}.\label{fig:prop1}
\end{figure}

   \item[(a2)] If $\dist_{G_S}(B_1,B_2)=2$ and $\dist_{G_S}(W_1,W_2)=4$. This case is equivalent to the previous one replacing black vertices $B_i$ and $B'_i$ by white ones $W_i$ and $W'_i$, respectively.
   \item[(a3)] If $\dist_{G_S}(B_1,B_2)=4$ and $\dist_{G_S}(W_1,W_2)=4$. \\
   Then, $\dist_{G_S}(B'_1,B'_2)=4$ and $\dist_{G_S}(W'_1,W'_2)=4$. Following the ideas given above, it is easy to see that every vertex in $G$ has maximum local girth, that is, ${\bf g}(G): 6^{14}$. Therefore $G$ is a Moore graph. This graph is precisely the Heawood graph, denoted by $H$ in figure \ref{fig:examp1}.
   \item[(a4)] If $\dist_{G_S}(B_1,B_2)=2$ and $\dist_{G_S}(W_1,W_2)=2$. \\ 
    Then, $\dist_{G_S}(B'_1,B'_2)=2$ and $\dist_{G_S}(W'_1,W'_2)=2$. Notice that the sequence of vertices $(W,W_1,v,W_2)$, where $v$ is one of the black vertices $v \in \{B_1,B_2,B'_1,B'_2\}$, is a cycle of length $4$. In fact, every vertex of $S$, together with vertices $\{W,W',B,B'\}$ have local girth $4$. Then ${\bf g}(G):6^2,4^{12}$ and $G$ is isomorphic to graph $G_3$ depicted in figure \ref{fig:examp2}. 
  \end{itemize}
  \begin{itemize}
  \item[(b)] $G_S=C_4 \cup C_4$. In this case every vertex of $S$ has local girth $4$. The local girth of the vertices in the set $\{W,W',B,B'\}$ depends on the configuration of these cycles of length four. Although there are serveral cases to take into account, all of them fall into one of these three cases, after a vertex relabeling.
   \end{itemize}
   \begin{itemize}
      \item[(b1)] $\dist_{G_S}(B_1,B'_1)=2$ and $\dist_{G_S}(W_1,W'_1)=2$. \\
      Then $\dist_{G_S}(B_2,B'_2)=2$ and $\dist_{G_S}(W_2,W'_2)=2$ and there are two possibilities for $G_S$: 
      Either $G_S$ is the union of the cycles $(B_1,W_1,B'_1,W_1)$ and $(C_4:B_2,W_2,B'_2,W'_2)$ or the union of the cycles $(B_1,W_2,B'_1,W'_2)$ and $(B_2,W_1,B'_2,W'_1)$. Both are equivalent with a properly relabeling. However, each vertex of $\{W,W',B,B'\}$ has local girth $6$ in this situation. Then ${\bf g}(G):6^6,4^{8}$ and $G$ is again the graph $G_1$ depicted in figure \ref{fig:prop1}, where each $C_4$ has been depicted in different color.
   \item[(b2)] If $\dist_{G_S}(B_1,B_2)=2$ and $\dist_{G_S}(W_1,W_2)=2$. Then, $W$ (resp. $B$) has local girth $4$ since it is adjacent both to $B_1$ and $B_2$ (resp. $W_1$ and $W_2$). The same reasoning applies to $W'$ and $B'$ since $\dist_{G_S}(B'_1,B'_2)=2$ and $\dist_{G_S}(W'_1,W'_2)=2$. As a consequence ${\bf g}(G):6^2,4^{12}$ and $G$ is isomporphic to $G_4$ (See figure \ref{fig:examp2}).
   \item[(b3)] $\dist_{G_S}(B_1,B_2)=2$ and $\dist_{G_S}(W_1,W'_1)=2$. Then $W$ and $W'$ has local girth $4$. Besides $B$ and $B'$ has maximum local girth. Hence ${\bf g}(G):6^4,4^{10}$ and $G$ is isomorphic to $G_2$ depicted in figure \ref{fig:examp2}.

  \end{itemize}

\end{proof}

\begin{figure}[htb]
\centering
\begin{tabular}{ccc}
\includegraphics[width=.4\textwidth]{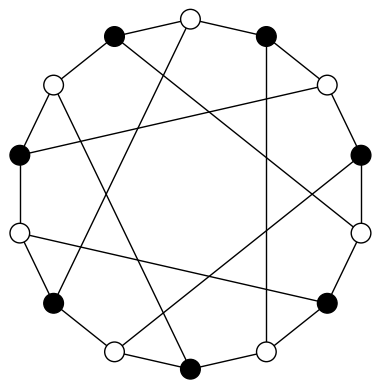} & & \includegraphics[width=.4\textwidth]{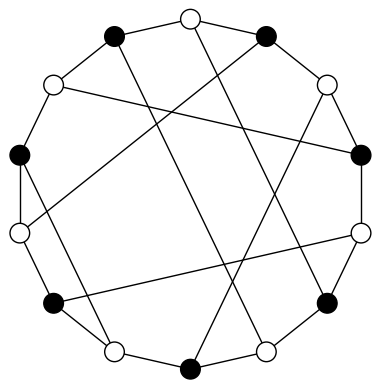} \\
Heawood Graph $H$ & & $G_1$ \\
${\bf g}(H): 6^{14}$; $\tilde{N}_1(H)=0$.   & & ${\bf g}(G_1): 6^6,4^8$; $\tilde{N}_1(G_1)=16$.
\end{tabular}
\caption{The Moore graph for $d=3$ and $g=6$ (Heawood graph) and its closest graph $G_1$ in terms of local girths.}\label{fig:examp1}
\end{figure}

Now we would like to rank each local bipartite Moore graph according to their closeness to the Moore graph. Two ranking measures are introduced in \cite{Cap10} in the context of radial Moore graphs. Here we describe the one involving local girths: Let us consider the set of graphs ${\cal LBM}(\Delta,g)$ and let $\mathbf{g}_{\Delta,g}$ be the vector of length $M^b_{\Delta,g}$ with all components equal to $g$, where $M^b_{\Delta,g}$ is the bipartite Moore bound \eqref{eq:bipgirthmoore_bound}. Notice that $\mathbf{g}_{\Delta,g}$ represent the girth vector of a bipartite Moore graph of degree $\Delta$ and girth $g$. For every positive integer $p$, 
\[
\tilde{N}_p(G)=\|\mathbf{g}(G)-\mathbf{g}_{\Delta,g}\|_p.
\]
In particular, 
\[
\tilde{N}_1(G)=\sum_{v\in V}(g-g(v))
\]
is the {\em girth norm } of $G$. Given two graphs $G_1,G_2\in {\cal LBM}(\Delta,g)$ we define $G_1$ and $G_2$ to be {\em girth-equivalent\/}, $G_1 \approx G_2$, if they have the same girth vector. In the quotient set of ${\cal LBM}(\Delta,g)$ by $\approx$, ${\cal LBM}(\Delta,g)/\approx$, we will say that $G_1$ is closer than $G_2$ to be a Moore graph if there exists a positive integer $l$ such that
\[
\tilde{N}_p(G_1)=\tilde{N}_p(G_2),\ p=1,\dots,l-1\quad \mathrm{and}\quad  
\tilde{N}_{l}(G_1)<\tilde{N}_{l}(G_2),
\]
in which case we will denote $G_1 \prec G_2$. \\
\begin{figure}[htb]
\centering
\begin{tabular}{ccc}
\includegraphics[width=.3\textwidth]{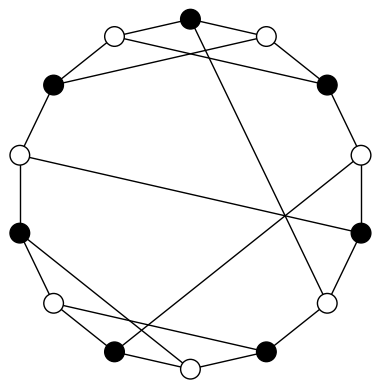} & \includegraphics[width=.3\textwidth]{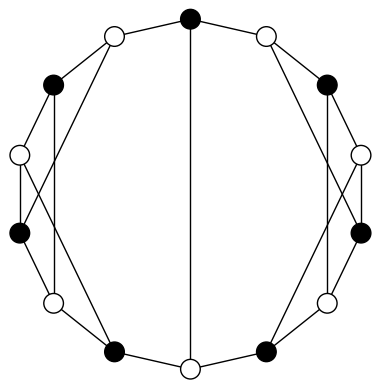}& \includegraphics[width=.3\textwidth]{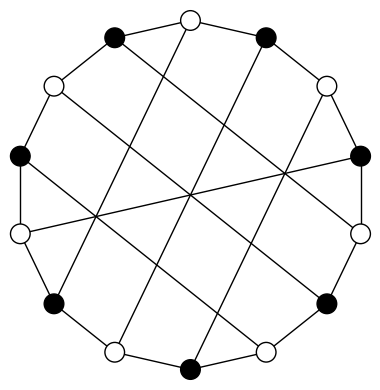}  \\
$G_2$ & $G_3$ & $G_4$  \\
${\bf g}(G_2):6^4,\ 4^{10}$ & ${\bf g}(G_3):6^2,\ 4^{12}$ & ${\bf g}(G_4):6^2,\ 4^{12}$  \\
$\tilde{N}_1(G_2)=20$ & $\tilde{N}_1(G_3)=24$ & $\tilde{N}_1(G_4)=24$ 
\end{tabular}
\caption{The remaining graphs in ${\cal LBM}(3,6)$ and their corresponding girth vector ${\bf g}(G)$ and girth norm $\tilde{N}_1(G)$. Notice that $G_3$ and $G_4$ have the same girth vector and hence both are girth-equivalent.}\label{fig:examp2}
\end{figure}

In the particular case $\Delta=3$ and $g=6$ it is suffice to compute the girth norm $\tilde{N}_1(G)$ of the five graphs to rank them. The girth norm for each one of these five graphs is calculated in proposition \ref{prop:enum} and it is resumed in figures \ref{fig:examp1} and \ref{fig:examp2}. Taking into account that $G_3$ and $G_4$ share the same vector of local girths, we have that:
\begin{corollary}
Let $H,G_1,G_2,G_3$ and $G_4$ be the five graphs in ${\cal LBM}(3,6)$. Then,
\[                                                                                                                                                                                                                                                                                                                                                                                                                                                                                                                                                                                                                                                                                                                                                                                                                                                                            
H \prec G_1 \prec G_2 \prec G_3 \approx G_4.                                                                                                                                                                                                                                                                                                                                                                                                                                                                                                                                                                                                                                                                                                                                                                                                                                                                                                                                  \]
\qed
\end{corollary}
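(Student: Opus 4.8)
The plan is to read the result directly off the data assembled in Proposition \ref{prop:enum}, since for these five graphs the preorder $\prec$ collapses to a comparison of first-order girth norms. First I would record the five girth vectors together with their girth norms exactly as computed in the proof of Proposition \ref{prop:enum} and displayed in Figures \ref{fig:examp1} and \ref{fig:examp2}: namely $\tilde{N}_1(H)=0$, $\tilde{N}_1(G_1)=16$, $\tilde{N}_1(G_2)=20$, and $\tilde{N}_1(G_3)=\tilde{N}_1(G_4)=24$. Each of these follows at once from the formula $\tilde{N}_1(G)=\sum_{v\in V}(g-g(v))$ by counting, in each graph, the vertices of local girth $4$ (each contributing $6-4=2$) against those of local girth $6$ (contributing $0$).

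The heart of the argument is the observation that the sequence $0,16,20,24$ is strictly increasing, so that in each of the first three comparisons the witnessing index $l$ in the definition of $\prec$ may be taken to be $l=1$. Concretely, $\tilde{N}_1(H)<\tilde{N}_1(G_1)$ already yields $H\prec G_1$, the inequality $\tilde{N}_1(G_1)<\tilde{N}_1(G_2)$ yields $G_1\prec G_2$, and $\tilde{N}_1(G_2)<\tilde{N}_1(G_3)$ yields $G_2\prec G_3$; no higher-order norms need be inspected, because the strict separation occurs already at the first coordinate.

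It then remains only to compare $G_3$ and $G_4$. Here the first-order norms coincide, so $l=1$ no longer separates them, and I would instead invoke the stronger fact established in cases (a4) and (b2) of Proposition \ref{prop:enum}: that $G_3$ and $G_4$ possess the identical girth vector $6^2,4^{12}$. By the definition of girth-equivalence this gives $G_3\approx G_4$, and in fact forces $\tilde{N}_p(G_3)=\tilde{N}_p(G_4)$ for every $p$, so that neither strictly precedes the other. Chaining the four relations produces $H\prec G_1\prec G_2\prec G_3\approx G_4$.

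The only point demanding any care — and the closest thing to an obstacle in an otherwise immediate deduction — is precisely the passage from $G_3$ to $G_4$: one must resist reading a strict order out of the coincident $\tilde{N}_1$ values and instead verify that the \emph{full} girth vectors agree, which is exactly what places $G_3$ and $G_4$ in a single class of ${\cal LBM}(3,6)/\approx$. Everything else is a transcription of the enumeration already performed in Proposition \ref{prop:enum}.
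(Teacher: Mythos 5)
Your proof is correct and follows essentially the same route as the paper: the authors likewise rank $H, G_1, G_2, G_3$ via the strictly increasing girth norms $\tilde{N}_1 = 0, 16, 20, 24$ computed in Proposition~\ref{prop:enum}, and conclude $G_3 \approx G_4$ from their shared girth vector $6^2, 4^{12}$. Your extra remark that the coincident girth vectors force $\tilde{N}_p(G_3)=\tilde{N}_p(G_4)$ for all $p$ is exactly the point the paper's definition of girth-equivalence is designed to capture.
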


\section{New record graphs in the context of the degree/diameter problem for bipartite graphs}\label{sec:gabi}

In this section we construct new record bipartite graphs with small defect that attend the Moore problem in the context of the degree/diameter problem. To consult more about this topic see \cite{table}. In particular, we construct a $(q+2)$-regular bipartite graph of diameter $3$ and girth $4$ adding $8$ vertices to the incidence graph of the projective plane of order $q$. See (\cite{BDF81,DF84,FPMPV13}) to consult the graphs given in \cite{table} which are attained and improved by our construction.

\subsection*{Description of the incidence graph of the projective plane or order $q$}
Firstly, we introduce the algebraic projective plane of order $q$ and their incidence graph $G_q$.
\\

Let $q$ be a prime power and denote by $GF(q)$ the Galois Field of order $q$. The following is a useful description of the projective plane over $GF(q)$: Let $P$ and $L$ two incident point and line that we will call the infinity point and line respectively, and let $\{P_0,P_1,\ldots,P_{q-1}\}$ the set of points, different of $P$, incident with $L$ and analogously $\{L_0,L_1,\ldots,L_{q-1}\}$ the set of lines, different of $L$, incident with $P$. 
\\

Moreover, let $\{(i,0),(i,1),\ldots,(i,q-1)\}$ the set of points, different to $P$, incident in $L_i$; and, let $\{[i,0],[i,1],\ldots,[i,q-1]\}$ the set of lines, different to $L$, incident with $P_i$. With the previous information, we construct the bipartite Moore tree of depth $3$, depicted on figure \ref{fig:moore_graph}, that is the spanning tree of $G_q$, the incidence graph of the projective plane of order $q$. 

The rest of the incidences are given by the cordinatization, here the line $[m,b]$ is adjacent in the graph with all the points $(x,y)$ that satisfy that $y=mx+b$ using the arithmetic of $GF(q)$. As we known, $G_q$ is a $(q+1,6)$-Moore Cage with diameter $3$ and girth $6$.\\

Figure \ref{fig:G3} depicts $G_3$, the incidence graph of the projective plane of order $3$. 

\begin{figure}[htb]
\centering
\begin{tabular}{ccc}
\includegraphics[width=.49\textwidth]{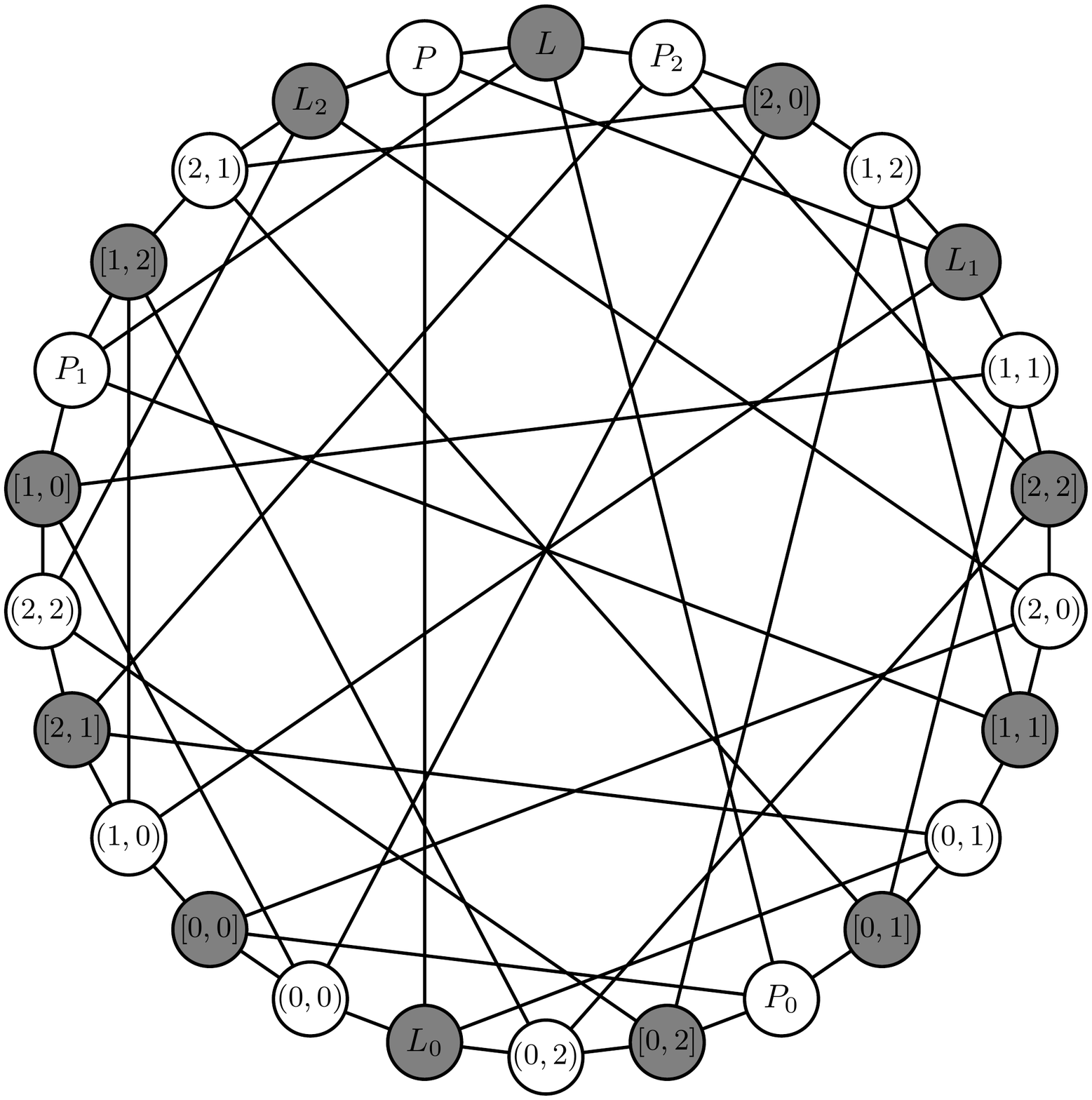} & & \includegraphics[width=.49\textwidth]{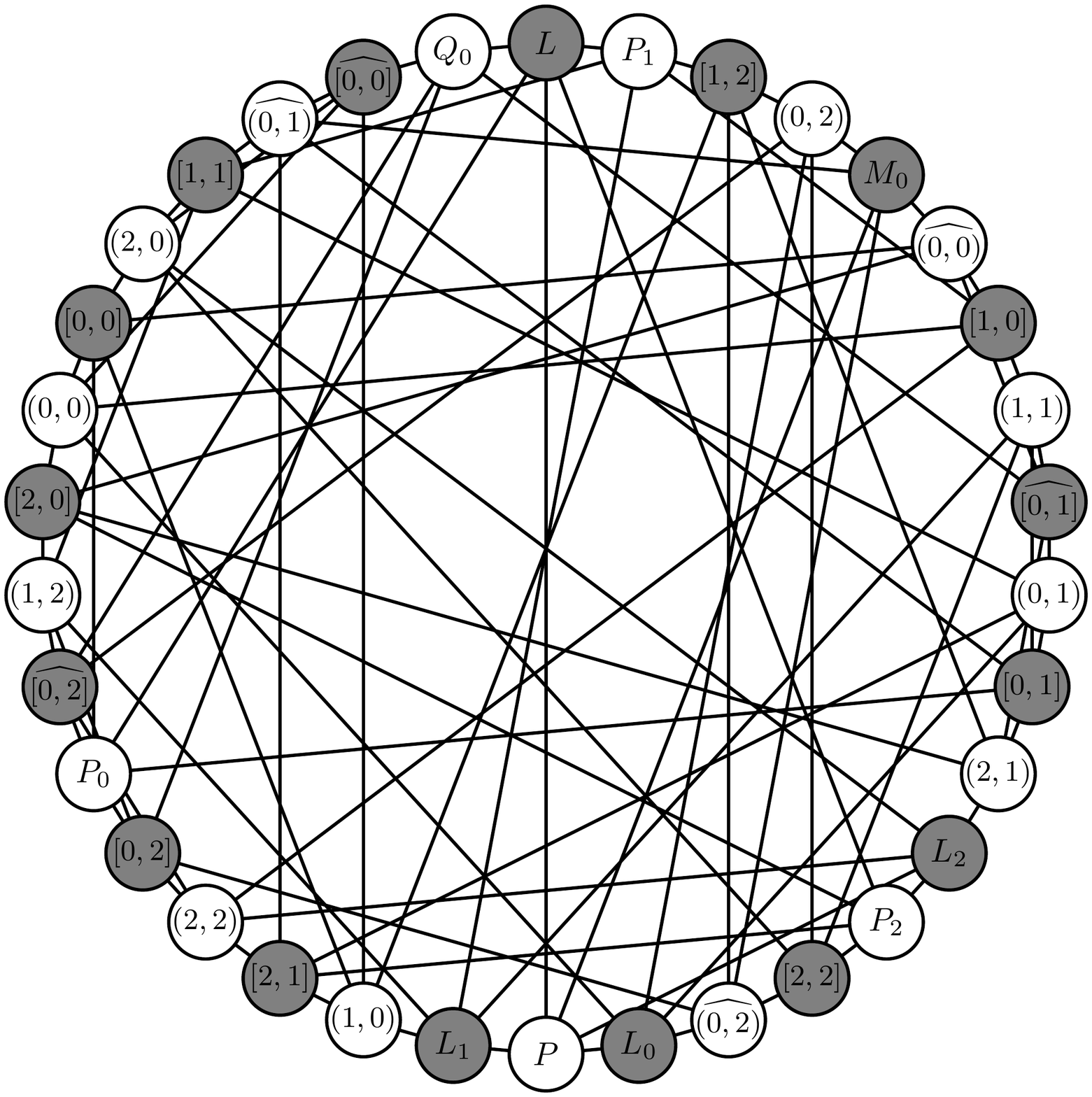} \\
$G_3$ & & $R_3$ \\
\end{tabular}
\caption{On the left $G_3$, the incidence graph of the projective plane of order $q=3$, and on the right the new graph $R_3$.}\label{fig:G3}
\end{figure}
In the sequel we construct, using $G_q$, another bipartite $(q+1)$-regular graph, called $H_q$, it has also diameter $3$, but the girth is equal to $4$. 

The next simple observation on $G_q$ is very useful for the construction of $H_q$: 

\begin{observation}\label{observation1}

\begin{itemize}
\item [(1)] The set of points incident with the line $[0,i]$, different of $L_0$, are: \newline $\{(0,i),(1,i),\ldots,(q-1,i)\}$.
\item [(2)] The set of lines incident with the point $(0,i)$, different of $P_0$, are: \newline $\{[0,i],[1,i],\ldots,[q-1,i]\}$.
\end{itemize}
\end{observation}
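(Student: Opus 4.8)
The plan is to read both incidences directly off the coordinatization $y=mx+b$ that defines the edges of $G_q$, and then to confirm that the listed sets are complete by invoking the fact that every line, and every point, of the projective plane of order $q$ is incident with exactly $q+1$ elements of the opposite kind.

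For part (1) I would specialise the defining relation to the line $[0,i]$, i.e.\ take $m=0$ and $b=i$. The equation becomes $y=0\cdot x+i=i$, which does not involve $x$. Hence a point $(x,y)$ is incident with $[0,i]$ precisely when $y=i$ and $x$ is arbitrary, producing the $q$ points $(0,i),(1,i),\dots,(q-1,i)$; these are pairwise distinct because, for $x\neq x'$, the points $(x,i)$ and $(x',i)$ lie on distinct lines $L_x\neq L_{x'}$ and are both different from $P$. The only further point incident with $[0,i]$ is where it meets the line $L$ at infinity, namely $P_0$ (recall that $[0,\cdot]$ are by definition the lines through $P_0$). This gives $q+1$ incident points in total, the exact number on a projective line, so the enumeration is complete.

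Part (2) is the point/line dual of part (1) and I would carry it out by the same computation, now fixing the point $(0,i)$, i.e.\ $x=0$ and $y=i$. A line $[m,b]$ is incident with $(0,i)$ iff $i=m\cdot 0+b$, that is iff $b=i$ with $m$ free, yielding the $q$ distinct lines $[0,i],[1,i],\dots,[q-1,i]$ (distinct since they pass through the distinct points $P_m$). The single remaining line through $(0,i)$ is $L_0$, the line through the infinity point $P$ on which $(0,i)$ lies by construction, so once more we recover the full set of $q+1$ lines through the point and the list is exhaustive.

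There is no genuine obstacle here: both assertions are immediate once the coordinates are fixed, since setting the first coordinate to $0$ frees exactly the variable being enumerated ($x$ in the first case, $m$ in the second) and pins the other to $i$. The only care required is bookkeeping, namely checking that the $q$ listed elements are distinct and correctly identifying the one remaining incidence (the point on $L$ in part (1), the line through $P$ in part (2)) so that the count matches the projective value $q+1$ and completeness of the enumeration is established.
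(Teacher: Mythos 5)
Your proof is correct and matches the paper's approach: the paper states this as a ``simple observation'' with no written proof precisely because it follows, exactly as you show, by specializing the incidence relation $y=mx+b$ to $m=0$, $b=i$ (resp.\ to $x=0$, $y=i$) and confirming completeness against the $q+1$ incidences of each line/point in a projective plane of order $q$. One useful byproduct of your bookkeeping: the excluded element in part (1) should be the point $P_0$ and in part (2) the line $L_0$, as you identify them, so the paper's statement appears to have $P_0$ and $L_0$ swapped (as written, it excludes a line from a set of points and vice versa).
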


\subsection*{General construction for bipartite graphs of  diameter $3$}
In this section we construct $H_q$ that is $(q+2)$-regular bipartite Moore graph of diameter $3$ and girth $4$ adding $8$ vertices to $G_q$. 

Let: $$V(H_q)= V(G_q) \cup \{Q_0,M_0,\widehat{(0,0)},\widehat{(0,1)},\widehat{(0,a)},\widehat{[0,0]},\widehat{[0,1]},\widehat{[0,a]}\},$$
be the set of vertices of $H_q$, where $a$ is any element of $GF(q)$ different of $0$ and $1$ and let:

$$E(H_q)=E(H_q)- \{((0,a),[0,a])\}\cup \{(P,M_0),(L,Q_0),(Q_0,\widehat{[0,j]}),(M_0,\widehat{(0,j)}),(Q_0,[0,i]),$$
$$(M_0,(0,i)),(P_0,\widehat{[0,a]}),(L_0,\widehat{(0,a)}),(\widehat{(0,0)},\widehat{[0,1]}),(\widehat{(0,1)},\widehat{[0,0]}),(\widehat{(0,j)},[t,j]),(\widehat{[0,j]},(t,j))\}$$
be the set of edges of $H_q$, such that $j \in \{0,1,a\}$, $i \in GF(q)-\{0,1\}$ and  $t \in GF(q)$.
\\

In the sequel we prove that $H_q$ is a bipartite graph with degree sequence $\{q,q+1\}$, girth $4$ and diameter $3$:
\\

To prove that $H_q$ is bipartite we only have to note that $Q_0$, and the new vertices denoted with parenthesis are in the partite set of the points whereas $M_0$ and the new vertices denoted with brackets are in the partite set of the lines in the incidence graph of the projective plane of order $q$. To show that it has the degree sequence given, notice that the new vertices, $P$, $L$, $P_0$, $L_0$, $\{(t,j),[t,j]\}$ for $j\in \{0,1,a\}$ and $t\in GF(q)$ have degree $q+1$, and the rest has degree $q$. It is important to notice that adding the edges $(Q_0,[0,a])$ and $(\widehat{(0,a)},[0,a])$ the degree of $[0,a]$ is equal to $q+2$, but, to construct $H_q$, we delete the edge $\alpha=((0,a),[0,a])$ and with this  we garantice that $\dg_{H_q}([0,a])=q+1$. A similar analysis garantice that $\dg_{H_q}((0,a))=q+1$. 

Clearly, the girth of $H_q$ is four because it is an incidence graph, it is bipartite, and it is plenty of $4$-cycles; for instance $(M_0,(0,a),L_0,\widehat{(0,a)})$ is a $C_4$.

Related with observation \ref{observation1}, we have this new observation: 
\begin{observation}\label{observation2}
For $H_q$ we have that 
\begin{itemize}
\item [(1)] For $i\in \{0,1\}$:
\begin{itemize}
\item [] $N([0,i])\cup N(\widehat{[0,i]})=\{(0,i),(1,i),\ldots,(q-1,i)\}$,
\item [] $N((0,i))\cup N(\widehat{(0,j)})=\{[0,i],[1,i],\ldots,[q-1,i]\}$.
\end{itemize}
\item [(2)] 
\begin{itemize}
\item []$N([0,a])\cup N(\widehat{[0,a]})=\{(1,a),\ldots,(q-1,a)\}$,
\item [] $N((0,a))\cup N(\widehat{(0,a)})=\{[1,a],\ldots,[q-1,a]\}$.
\end{itemize}
\end{itemize}
\end{observation}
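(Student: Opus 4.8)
The plan is to prove Observation \ref{observation2} by direct verification, taking Observation \ref{observation1} as the starting point and then tracking exactly which edges incident to the relevant vertices are deleted or added in passing from $G_q$ to $H_q$. The construction is symmetric under interchanging points with lines (swapping $(x,y)$ with $[x,y]$, $P$ with $L$, $P_0$ with $L_0$, and $Q_0$ with $M_0$), since every added edge on the point side has a mirror image on the line side, and the single deleted edge $((0,a),[0,a])$ is self-dual. Hence I would only establish the equalities on the line side, namely the formulas for $N([0,i])\cup N(\widehat{[0,i]})$ in part (1) and $N([0,a])\cup N(\widehat{[0,a]})$ in part (2); the point-side equalities follow verbatim by this duality.

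First I would record, from the coordinatization and Observation \ref{observation1}, that in $G_q$ the line $[0,i]$ is incident with exactly the whole $i$-th column of coordinate points together with $P_0$. Then I would read off from the edge list of $H_q$ every modification touching $[0,i]$ and its copy $\widehat{[0,i]}$: the added edges $(\widehat{[0,j]},(t,j))$ with $j=i$ and $t$ ranging over all of $GF(q)$ attach $\widehat{[0,i]}$ to the entire column $\{(0,i),\dots,(q-1,i)\}$, while the edges involving $Q_0$ and the paired edges $(\widehat{(0,0)},\widehat{[0,1]})$, $(\widehat{(0,1)},\widehat{[0,0]})$ take over the role played by $P_0$ in $G_q$. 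The point of the union $N([0,i])\cup N(\widehat{[0,i]})$ is precisely that the auxiliary vertices $P_0$ and $Q_0$ cancel out, leaving exactly $\{(0,i),(1,i),\dots,(q-1,i)\}$, which is the assertion of part (1).

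For part (2) the identical computation applies to column $a$, the only difference being the unique deleted edge $((0,a),[0,a])$, which removes the point $(0,a)$ from the reconstructed column and therefore yields $N([0,a])\cup N(\widehat{[0,a]})=\{(1,a),\dots,(q-1,a)\}$, and dually for $N((0,a))\cup N(\widehat{(0,a)})$. I expect the main obstacle to be purely the bookkeeping rather than any genuine idea: the construction simultaneously alters the neighbourhoods of many vertices ($P_0,L_0,Q_0,M_0$ and all coordinate points and lines in columns $0$, $1$ and $a$), and each added edge carries an index restriction ($i\in\{0,1\}$ versus $i\in GF(q)-\{0,1\}$, $j\in\{0,1,a\}$, $t\in GF(q)$) that must be applied consistently. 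The delicate point is to confirm that none of the structural vertices $P_0,Q_0,L_0,M_0$ survives in the unions, so that each union collapses exactly to a coordinate column; once the edge accounting is organised columnwise this is routine, and the deleted edge in column $a$ is the sole asymmetry distinguishing part (2) from part (1).
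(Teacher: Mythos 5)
Your plan (direct edge bookkeeping organised by point--line duality) is the natural one, and for what it is worth the paper offers no proof of this observation at all, treating it as immediate from the construction. The duality reduction itself is legitimate here, since the added edge set and the deleted edge are invariant under the swap of the two sides. But the two key steps by which you arrive at the stated equalities are both false, and they cannot be repaired, because the equalities do not hold when read literally as unions of neighbourhoods.

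First, nothing ``cancels out'' of a union. For $i\in\{0,1\}$ no edge incident to $[0,i]$ is deleted, so $P_0\in N_{G_q}([0,i])\subseteq N_{H_q}([0,i])$; the added edge $(Q_0,\widehat{[0,i]})$ puts $Q_0$ into $N(\widehat{[0,i]})$; taking $t=0$, $j=i$ in the added edges $(\widehat{(0,j)},[t,j])$ puts $\widehat{(0,i)}$ into $N([0,i])$; and the pairing edges $(\widehat{(0,0)},\widehat{[0,1]})$, $(\widehat{(0,1)},\widehat{[0,0]})$ put the other hatted point into $N(\widehat{[0,i]})$. Hence $N([0,i])\cup N(\widehat{[0,i]})$ is the row $\{(0,i),\dots,(q-1,i)\}$ \emph{together with} $P_0$, $Q_0$ and two hatted vertices; it is the \emph{intersection} $N([0,i])\cap N(\widehat{[0,i]})$ that equals the bare row. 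Second, and more seriously, your part (2) contradicts the construction itself: in the added edges $(\widehat{[0,j]},(t,j))$ the index $t$ ranges over \emph{all} of $GF(q)$, so $(0,a)\in N(\widehat{[0,a]})$ --- this is precisely the edge $\alpha^{*}=((0,a),\widehat{[0,a]})$ that the paper needs later so that deleting $\alpha=((0,a),[0,a])$ does not increase distances. The deletion removes $(0,a)$ only from $N([0,a])$, not from the union, so $N([0,a])\cup N(\widehat{[0,a]})$ contains the whole row including $(0,a)$, plus $P_0$, $Q_0$ and $\widehat{(0,a)}$; it cannot equal $\{(1,a),\dots,(q-1,a)\}$. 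The statement one can actually prove, and the one the diameter argument uses, is a coverage statement: for $j\in\{0,1,a\}$ every coordinate point $(t,j)$ of row $j$ is adjacent to $[0,j]$ or to $\widehat{[0,j]}$ (and dually for lines), the common coordinate neighbours of $[0,j]$ and $\widehat{[0,j]}$ being the full row for $j\in\{0,1\}$ and the row minus $(0,a)$ for $j=a$. Your write-up instead manufactures the literal equalities through a cancellation mechanism that does not exist.
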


We are ready to prove that the diameter of $H_q$ is $3$: 

Notice that the distance between two vertices on $G_q$ is preserving in $H_q$, except the distances that involve the deleted edge $\alpha=((0,a),[0,a])$. In fact, in $H_q$, $d((0,a),[0,a])=3$ because there exists a $3$-path $((0,a),\widehat{[0,a]},L_0,[0,a])$. 

Moreover, as $\alpha^*=((0,a),\widehat{[0,a]})$ and $\alpha^{**}=([0,a],\widehat{(0,a)})$ are edges of $H_q$, by observation \ref{observation2}, the paths that use to used in $G_q$ the deleted edge $\alpha$ use in $H_q$ the edges $\alpha^*$ and $\alpha^{**}$, with this the vertices $(0,a)$ and $[0,a]$ preserve their distances with all the vertices of $G_q$.
\\

Now, we will prove that the new vertices have eccentricity at most three. 

\begin{itemize}
\item By observation \ref{observation2}, we have that $\widehat{(0,s)}$ have the same distance to all the vertices of $G_q$ than the vertex $(0,s)$, for $s\in \{0,1,a\}$, except with the vertices $\{(0,t),L_0,P\}$ for $t\in GF(q)$. 
\begin{itemize}
\item The $d(\widehat{(0,s)},(0,t))=2$ for $s\in \{0,1\}$, and $t\in GF(q)-\{0,1\}$, because there exists the path $\{\widehat{(0,s)},M_0,(0,t)\}$.
\item For $s=0$ and $t\in \{0,1\}$; $d(\widehat{(0,0)},(0,0))=2$ by the path $\{\widehat{(0,0)},[0,0],(0,0)\}$; $d(\widehat{(0,0)},(0,1))=2$ by the path $\{\widehat{(0,0)},\widehat{[0,1]},(0,1)\}$. The $d(\widehat{(0,0)},L_0)=3$ by the path $(\widehat{(0,0)},M_0,\widehat{(0,a)},L_0)$. An analogous analysis can be doing for $s=1$. 
\item For $s=a$, $L_0$ is adjacent with $\widehat{(0,a)}$ and $d(\widehat{(0,a)},(0,s))=2$ for $s\in \{0,1,a\}$ because $(\widehat{(0,a)},L_0,(0,s))$ is a path of length $2$. 

Notice that, by construction, the distance of $\widehat{(0,s)}$ to $P$ is equal to three. 
\end{itemize}

\item A similar analysis should be for $\widehat{[0,s]}$ for $s\in \{0,1,a\}$  to all the vertices of $G_q$ and for the vertices $\{[0,t],P_0,L\}$ for $t\in GF(q)$. 

\item To finish we will prove that the distance of $M_0$ to the vertices of $G_q$ is at most three, notice that their first neighborhood coincides with the first neighborhood of $L_0$ (except with $(0,s)$ for $s\in \{0,1\}$). Notice that as $\widehat{(0,s)}\in N(M_0)$ for $s\in \{0,1\}$, using  the observation \ref{observation2}, we can conclude that the second neighborhoods of $M_0$ and $L_0$
coincide in all. Then we only have to check the distance to $(0,s)$ for $s\in \{0,1\}$ and the distance to $\widehat{[0,t]}$ for $t\in \{0,1,a\}$ and $Q_0$. 
\begin{itemize}
\item The $d(M_0,(0,s))=3$ for $s\in\{0,1\}$ by the path $(M_0,(0,a),L_0,(0,s))$. 
\item The $d(M_0,\widehat{[0,0]})=d(M_0,\widehat{[0,1]})=d(M_0,\widehat{[0,a]})$ by the paths $(M_0,\widehat{(0,1)},\widehat{[0,0]})$, $(M_0,\widehat{(0,0)},\widehat{[0,1]})$, and $(M_0,(0,a),\widehat{[0,a]})$ respectively.
\item The $d(M_0,Q_0)=3$ by the path $(M_0,P,L,Q_0)$.
\end{itemize}

\item An analogously analysis could be for $Q_0$. 
\end{itemize}


Finally, to obtain a $(q+2)$-regular graph, called $R_q$, we add to $H_q$ a matching: 
\\
$\{(P_i,L_i),((x,y),[x,y])\}$ for $i\in GF(q)-\{0,1,a\}$ and $\{x,y\}\in GF(q)$ with $\dg_{H_q}\{P_i,(x,y)\}=q+1.$
This matching is well defined because, by construction if $\dg_{H_q}\{P_i,(x,y)\}=q+1.$ then also $\dg_{H_q}\{L_i,[x,y]\}=q+1.$\\

With the previous construction we have the following theorem:

\begin{theorem}\label{53bipartitegraph}
Let $q$ be a power of prime, then there exists a $(q+2)$-bipartite graph of diameter $3$ and order $2(q^2+q+5)$. 
\end{theorem}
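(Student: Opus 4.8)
The plan is to show that the graph $R_q$ constructed above possesses all four required properties: it is bipartite, it has order $2(q^2+q+5)$, it is $(q+2)$-regular (in particular of maximum degree $q+2$), and it has diameter exactly $3$. Three of these reduce to bookkeeping; the diameter is where the genuine work lies, although most of that work has already been discharged in the analysis of $H_q$.

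First I would settle the order. Since $R_q$ and $H_q$ share the same vertex set, and $H_q$ arises from the incidence graph $G_q$ of the projective plane of order $q$ by adjoining the eight vertices $Q_0,M_0,\widehat{(0,0)},\widehat{(0,1)},\widehat{(0,a)},\widehat{[0,0]},\widehat{[0,1]},\widehat{[0,a]}$, we get $|V(R_q)|=|V(G_q)|+8=2(q^2+q+1)+8=2(q^2+q+5)$. Bipartiteness is equally direct: the partition of $V(G_q)$ into points and lines extends by placing $Q_0,\widehat{(0,0)},\widehat{(0,1)},\widehat{(0,a)}$ on the point side and $M_0,\widehat{[0,0]},\widehat{[0,1]},\widehat{[0,a]}$ on the line side, and one verifies that every adjoined edge, as well as every matching edge $(P_i,L_i)$ and $((x,y),[x,y])$, joins a point-type vertex to a line-type vertex. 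For the regularity I would compute $\dg_{H_q}(v)$ for every vertex and then check that the matching meets each degree-deficient vertex the right number of times, bringing all degrees to the common value $q+2$; the essential consistency point, already observed, is the point/line symmetry $\dg_{H_q}(P_i)=q+1\Leftrightarrow \dg_{H_q}(L_i)=q+1$ (and likewise for $(x,y)$ and $[x,y]$), which makes the matching well defined.

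The heart of the proof is the diameter. For the upper bound I would invoke the case analysis already carried out showing $\diam(H_q)=3$: this is the laborious part, since it requires tracking, through Observations \ref{observation1} and \ref{observation2}, how the deletion of $\alpha=((0,a),[0,a])$ and the rewiring through the eight new vertices perturb distances in $G_q$, and confirming that each new vertex (and $M_0,Q_0$ in particular) reaches every other vertex within three steps. Because $R_q$ is obtained from $H_q$ solely by adding matching edges, we have $\dist_{R_q}(x,y)\le \dist_{H_q}(x,y)\le 3$ for all $x,y$, so the upper bound is inherited for free. For the matching lower bound I would appeal to the bipartite Moore bound \eqref{eq:bipmoore_bound}: a bipartite graph of maximum degree $q+2$ and diameter $2$ has at most $M^b_{q+2,2}=2(q+2)$ vertices, whereas $|V(R_q)|=2(q^2+q+5)>2(q+2)$ for every prime power $q\ge 2$. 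Hence $\diam(R_q)\ge 3$, and combining the two bounds gives $\diam(R_q)=3$.

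The main obstacle is therefore the diameter upper bound for $H_q$, not for $R_q$: one must check exhaustively that no pair of vertices is pushed to distance $4$ by the surgery around the deleted edge and the eight adjoined vertices. The two observations are engineered exactly for this — they guarantee that whenever a geodesic of $G_q$ used $\alpha$, a path of the same length through $\alpha^*=((0,a),\widehat{[0,a]})$ and $\alpha^{**}=([0,a],\widehat{(0,a)})$ is available, and that the neighbourhood of each new vertex replicates that of a corresponding original vertex closely enough to keep its eccentricity at $3$. Once that case analysis is complete, the order, bipartiteness, regularity, and the matching lower bound assemble into the statement by the counting and monotonicity arguments above.
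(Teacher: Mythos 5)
Your proposal is correct and follows essentially the same route as the paper: it takes the graph $R_q$ built from $G_q$ via $H_q$, verifies order, bipartiteness and $(q+2)$-regularity (using the same point/line symmetry that makes the matching well defined), and inherits $\diam(R_q)\le 3$ from the eccentricity analysis of $H_q$, since adding the matching edges cannot increase distances. The only point you add beyond the paper is the explicit lower bound $\diam(R_q)\ge 3$ via the bipartite Moore bound $M^b_{q+2,2}=2(q+2)<2(q^2+q+5)$, a detail the paper leaves implicit.
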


The right side of figure \ref{fig:G3} depicts $R_3$, the $(5,3)$-biregular bipartite graph constructed before and given in theorem \ref{53bipartitegraph}.

\section{Questions and open problems}\label{sec:openproblems}

We give a complete enumeration of ${\cal LBM}(3,6)$ in section \ref{sec:measure}. It would be nice to have this enumeration also for other values of $\Delta$ and/or $g$. Nevertheless, it seems that the number of graphs in ${\cal LBM}(\Delta,g)$ increases very quickly with $\Delta$ and/or $g$, since the order of the graphs (Moore bound) follows an exponential law. 
\begin{problem}
Give a complete enumeration of ${\cal LBM}(\Delta,g)$ for any $\Delta \geq 3$ and/or $g \geq 6$, other than $(\Delta,g)=(3,6)$.
\end{problem}

There are $13$ graphs in ${\cal RB}(3,6)$ (they can be counted using an appropiatte software, like Nauty \cite{nauty}) but just $5$ of them belong to ${\cal LBM}(3,6)$. There are $23466857$ graphs in ${\cal RB}(3,8)$ (again one can use Nauty), but we do not know how many of them are local bipartite Moore graphs.
\begin{question}
Can we say something about the ratio $\dfrac{|{\cal LBM}(\Delta,g)|}{|{\cal RB}(\Delta,g)|}$?
 
\end{question}

Local bipartite Moore graphs include Moore graphs, but Moore graphs do not exist for infinitely many values of $\Delta$ and $g$. For any of these combinations of $\Delta$ and $g$ it would be nice to have the closest graph to the `theoretical' Moore graph in terms of the girth norm. For $\Delta=3$ and $g=6$ we have seen that $G_1$ is the closest graph to the Moore graph. Even for these cases when Moore graph exist, which is the `closest' graph to the Moore one? 

\begin{problem}
Find the closest graph (in terms of local girths) to the Moore graph for other values of $\Delta$ and/or $g$.
\end{problem}

In the context of degree-diameter problem of bipartite Moore graphs the principal open problems are construct graphs that improve the graphs given in \cite{table}. In particular we include two particular problems related with our results:
\begin{problem}
Find smaller bipartite regular graphs with the same parameters that we give in this paper. In other words "improve" our construction.
\end{problem}

\begin{problem}
Generalize our construction to construct bipartite regular graphs of diameters $4$ and $6$.
\end{problem} 

\subsection*{Acknowledgments}
Research of N. L\'opez was supported in part by grant MTM2017-86767-R (Spanish Ministerio de Ciencia e Innovacion) and research of G. Araujo was supported by PASPA-DGAPA Sabatical Year 2020, CONACyT-M{\' e}xico under Project 282280 and PAPIIT-M{\' e}xico under Projects IN107218, IN106318. G. Araujo would like to thank Ruben Alfaro 
for his help and computation assistance at the first of the construction on the graph given in Section \ref{sec:gabi}.


\end{document}